\newcommand{\R}{{\mathbb{R}}}
\newtheorem{thm}{Theorem}
\newtheorem*{thmb}{Bray's Theorem}
\newtheorem*{thms}{Symmetrization Theorem}
\newtheorem{lemma}[thm]{Lemma}
\newtheorem*{conjecture}{Bray-Iga Conjecture}
\theoremstyle{remark}
\theoremstyle{remark}
\newtheorem{remark}[thm]{Remark}
\theoremstyle{definition}
\newtheorem{defn}[thm]{Definition}
\theoremstyle{remark}
\begin{document}

\title[A volumetric Penrose inequality]{A volumetric Penrose inequality for conformally flat manifolds}

\author{Fernando Schwartz}

\begin{abstract} We consider asymptotically flat Riemannian manifolds with nonnegative scalar curvature
that are conformal to $\R^{n}\setminus \Omega,~n\ge 3$, and so that their boundary is a minimal hypersurface.
(Here, $\Omega\subset \R^{n}$ is open bounded with smooth mean-convex boundary.)
We prove that the ADM mass of any such manifold is bounded below by 
$\left(V/\beta_{n}\right)^{(n-2)/n}$, where $V$ is the
Euclidean volume of $\Omega$ and $\beta_{n}$ is the volume of the Euclidean unit $n$-ball. This gives a partial proof to a conjecture of Bray and Iga \cite{brayiga}. Surprisingly, we do not require the boundary to be outermost.
\end{abstract}

\address{Department of Mathematics, University of Tennessee, Knoxville, USA}
\email{fernando@math.utk.edu}
\maketitle

\section{Introduction}

One of the major results in differential geometry is the positive mass inequality, which asserts that any asymptotically flat Riemannian manifold $M$ with nonnegative scalar curvature has nonnegative ADM mass. Furthermore, the inequality is rigid, in that
the ADM mass is strictly positive unless $M$ is isometric to the Euclidean space $\R^{n}$. This inequality was proved in 1979 by Schoen and Yau \cite{schoenyau}  for manifolds of dimension $n\le 7$ using minimal surface techniques. Witten 
\cite{witten} subsequently found a different argument based on spinors and the Dirac operator. (See also \cite{b} and \cite{pt}.) Witten's argument works for any spin manifold M, without any restrictions on the dimension.  

A refinement to the positive mass inequality in the case when black holes are present is the Riemannian Penrose inequality.  It
 asserts that
any asymptotically flat manifold $M$ with nonnegative scalar curvature containing an
outermost minimal hypersurface of area $A$ has ADM mass $m$ that satisfies
\begin{equation}\label{pen}\tag{RPI}
m\ge \frac{1}{2}\left(\frac{A}{\omega_{n-1}}\right)^{\frac{n-2}{n-1}},
\end{equation}
where $\omega_{n-1}$ is the area of the $(n-1)$-sphere $\mathbb{S}^{n-1}$.  This
inequality is also rigid, in that it is strict unless $M$ is isometric to the Riemannian Schwarzschild manifold.\footnote{See the beginning of \S3 
for the precise definition.}  This inequality was
first proved in three dimensions in 1997 by Huisken and Ilmanen  \cite{huiskenilmanen} for the case of a single black hole. In 1999, Bray \cite{bray} extended this result, still in dimension three, to the general case of multiple black holes using a different technique.   Later, Bray and Lee \cite{braylee} generalized Bray's proof for dimensions $n\le 7$, with the extra requirement that $M$ be spin
for the rigidity statement.\\

A special situation arises if we restrict ourselves to the case of {\it conformally flat} manifolds.  There,  the proof of the positive mass theorem follows from Green's formula.  In view of this, Bray and Iga   conjectured the following 
 in \cite{brayiga}.
 
 \begin{conjecture}
 The Riemannian Penrose inequality holds for arbitrary-dimensional conformally flat, asymptotically
 flat manifolds with nonnegative scalar curvature.
\end{conjecture}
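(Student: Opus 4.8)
\emph{Conformal set-up.} Write the metric as $g=u^{4/(n-2)}\delta$ on $\R^{n}\setminus\Omega$, where $\delta$ is the Euclidean metric. The hypotheses translate into purely potential-theoretic data: nonnegative scalar curvature is equivalent to superharmonicity of the conformal factor,
\[
R_{g}=-\tfrac{4(n-1)}{n-2}\,u^{-\frac{n+2}{n-2}}\,\Delta u\ge 0\iff \Delta u\le 0,
\]
asymptotic flatness becomes $u=1+\tfrac{m}{2}\,|x|^{2-n}+o(|x|^{2-n})$ (so the ADM mass $m$ is read off the leading coefficient), and the $g$--minimality of $\partial\Omega$ is the Robin condition
\[
\partial_{\nu}u=-\tfrac{n-2}{2(n-1)}\,H_{\delta}\,u\qquad\text{on }\partial\Omega,
\]
with $\nu$ the Euclidean outer normal into $M$ and $H_{\delta}>0$ the Euclidean (mean-convex) mean curvature. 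The horizon area is $A=\int_{\partial\Omega}u^{\frac{2(n-1)}{n-2}}\,dA_{\delta}$. The plan is to prove the sharp bound $m\ge\frac12(A/\omega_{n-1})^{(n-2)/(n-1)}$ entirely at the level of this exterior Euclidean problem.

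\emph{A first identity.} Integrating $\Delta u\le0$ over $\{|x|<R\}\cap M$, letting $R\to\infty$, and substituting the Robin condition gives
\[
m\;\ge\;\frac{1}{(n-1)\,\omega_{n-1}}\int_{\partial\Omega}H_{\delta}\,u\,dA_{\delta},
\]
the deficit being a positive multiple of the (nonnegative) total Riesz mass $-\int_{M}\Delta u$ of the bulk. This already recovers the positive mass statement and, in the scalar--flat case, reduces matters to a single boundary functional. It is tempting to stop here, but the resulting inequality $(\ast)\colon\ \frac{1}{(n-1)\omega_{n-1}}\int_{\partial\Omega} H_{\delta}u\ge\frac12(A/\omega_{n-1})^{(n-2)/(n-1)}$ is \emph{false} as a free boundary inequality: for a round $\partial\Omega$ it reduces to $u|_{\partial\Omega}\le 2$, which holds only because the global constraints (harmonicity outside, $u\to1$, and the minimal condition) pin the boundary value at exactly $2$ in the Schwarzschild model. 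Thus the real content is global: the asymptotics at infinity must be used to control $u$ on the horizon.

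\emph{The engine: monotonicity from the horizon to infinity.} To transport the area constraint out to where the mass lives I would run a monotone quantity along a foliation connecting $\partial\Omega$ to the large coordinate spheres---either weak inverse mean curvature flow in $g$, or, to stay dimension--free, the level sets $\{u=t\}$ of the conformal factor (equivalently of the exterior Green's function). On a minimal horizon the Hawking mass equals $\frac12(A/\omega_{n-1})^{(n-2)/(n-1)}$, and along coordinate spheres it converges to $m$; the Penrose inequality follows once the Hawking--type functional is shown to be nondecreasing along the foliation. The Geroch computation produces, besides manifestly good terms, a remainder that is \emph{not} signed for $n>3$ on a general manifold with $R_{g}\ge0$; the decisive use of conformal flatness is that, after substituting $g=u^{4/(n-2)}\delta$, this remainder becomes an explicit Euclidean expression in $u$, $|\nabla u|$, and the second fundamental form of the Euclidean level sets that I expect to be nonnegative in every dimension. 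Tracking the equality case should force the level sets to be concentric round spheres and $u$ to be radial, i.e.\ the Riemannian Schwarzschild manifold, giving rigidity.

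\emph{Main obstacle.} The crux is precisely this dimension--free monotonicity: establishing it rigorously requires a weak formulation (the level sets $\{u=t\}$, or the flow leaves, may be singular or disconnected and jump across critical values of $u$), together with the verification that the conformal remainder is genuinely nonnegative for all $n$ and not merely for $n=3$. A secondary point is the removal of the \emph{outermost} hypothesis: because the argument only compares horizon data against the fixed asymptotics and never invokes an outer barrier, superharmonicity should allow the foliation to start from any minimal boundary; making this rigorous---ruling out interference from enclosed minimal surfaces---is where I expect the remaining technical difficulty to lie.
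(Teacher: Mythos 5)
Your proposal is not a proof: its entire weight rests on the ``engine'' step, and that step is exactly the open content of the Bray--Iga conjecture, not a technicality. The Geroch monotonicity of the Hawking mass along (weak) inverse mean curvature flow is specifically three-dimensional: it uses the Gauss--Bonnet theorem on the two-dimensional flow leaves to convert the integrated intrinsic curvature of the leaves into a topological constant, and for $n>3$ the corresponding integral is not topological, so the remainder in the Geroch computation has no sign. You assert that after substituting $g=u^{4/(n-2)}\delta$ the remainder ``becomes an explicit Euclidean expression \ldots that I expect to be nonnegative in every dimension,'' but you neither write this expression down nor verify its sign; nothing in conformal flatness is known to rescue it, and the paper itself flags (in discussing Bray--Miao) that Hawking-mass monotonicity along IMCF ``is known to work only in dimension three.'' Note also that the statement you are addressing is a \emph{conjecture}: the paper does not prove it either. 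What the paper proves is a strictly weaker, suboptimal ``volumetric'' bound $m\ge (V/\beta_n)^{(n-2)/n}$, and it does so by an entirely different and monotonicity-free mechanism: Bray's mass--capacity theorem $m\ge\mathcal{C}(\Sigma,g)$ (extended to spin doubles), the pointwise bound $u\ge 1$ obtained from superharmonicity plus the Hopf-type boundary argument using mean-convexity of $\partial\Omega$, and P\'olya--Szeg\"o spherical symmetrization to compare the capacity of $\Sigma$ with that of a round sphere of the same enclosed Euclidean volume. That route deliberately trades the sharp area term for a volume term precisely to avoid the unproven higher-dimensional monotonicity on which your sketch depends.

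A second, concrete error is your ``secondary point'' that the outermost hypothesis should be removable for the \emph{sharp} inequality. It is not: as the paper's Remark \ref{rem} and its footnote recall (citing p.~358 of \cite{huiskenilmanen}), spherically symmetric --- hence conformally flat --- metrics with fixed mass admit minimal, non-outermost boundaries of arbitrarily large area, so the sharp area inequality fails outright if the foliation is started from an arbitrary minimal boundary. The paper's theorem survives without the outermost assumption only because its right-hand side involves the Euclidean volume of $\Omega$ (together with the normalization $u\to 1$), and a non-outermost boundary merely yields a weaker volume bound; no analogous monotone substitute exists for the area functional. So even if your dimension-free monotonicity could be established, your argument as framed would prove a false statement unless the outermost hypothesis is restored.
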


  Bray and Iga gave a partial proof of this conjecture
  in \cite{brayiga}.  They showed that the RPI holds
   with suboptimal constant $c<1$ on manifolds
  conformal to the flat metric on $\R^3$ minus the origin.  Specifically, they
  proved that $m\ge c\sqrt{A/16\pi}$ where $A$ is the infimum of the areas
  of all surfaces enclosing the origin.\\

  %   Indeed, Theorem 2 of \cite{brayiga} shows that, modulo a %universal constant, the  RPI holds for manifolds conformal to $\R^{3}$ minus %the origin so long as the area  of any hypersurface enclosing the origin is %bounded below by the positive number $A>0$.  \\

In this paper we prove what we call a ``volumetric" Penrose inequality for conformally flat manifolds which works in arbitrary dimensions, thus partially answering the Bray-Iga conjecture.
An important point to note concerning our inequality is that it is also a
suboptimal inequality, in that it is weaker than than the RPI in the 
cases where the latter is applicable.
 On the other hand, our Theorem works in all dimensions $n\ge 3$. This is
particularly interesting in dimensions 8 and above, where no such results
(asides from spherically symmetric manifolds) were known to exist.\footnote{At the time of submission the author found out that
Lam, a student of Bray, has proved the positive mass inequality for graphs 
of asymptotically flat functions over $\R^n$, and the Riemannian Penrose inequality for graphs on $\R^n$ with convex boundaries, all this for $n\ge 3$.}

The precise statement of our main theorem
is the  following.

\begin{thm} \label{prs}
Suppose that $(M^{n},g), ~n\ge 3,$
is an asymptotically flat $n$-dimensional manifold  with nonnegative scalar curvature which is isometric to  $(\R^{n}\setminus \Omega,
u^{4/(n-2)}\delta_{ij})$, where $\Omega\subset \R^{n}$ is an open bounded set with smooth mean-convex boundary (i.e. having positive mean curvature), and 
$u$ is normalized so that $u\to 1$ towards infinity. 
If the boundary of $M$ is a minimal hypersurface, then
\begin{equation}\label{penrose}
m\ge \left(\frac{V}{\beta_n}\right)^{\frac{n-2}{n}},
\end{equation}
where $m$ is the ADM mass of $(M,g)$, $V$ is the volume of $\Omega$ with respect to the Euclidean metric, and
$\beta_{n}$ is the volume of the Euclidean unit $n$-ball.  
\end{thm}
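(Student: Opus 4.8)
The plan is to work entirely downstairs, on $\R^{n}\setminus\Omega$, exploiting the conformal structure. Writing $g=u^{4/(n-2)}\delta_{ij}$, the hypothesis that $g$ has nonnegative scalar curvature is equivalent to $\Delta u\le 0$ (superharmonicity) for the flat Laplacian, since $R_{g}=-\frac{4(n-1)}{n-2}u^{-(n+2)/(n-2)}\Delta u$. Asymptotic flatness with $u\to 1$ forces an expansion $u=1+A|x|^{2-n}+o(|x|^{2-n})$, and calibrating against the Schwarzschild profile $1+\tfrac{m}{2}|x|^{2-n}$ gives $m=2A$; thus it suffices to bound $A$ from below. Finally, letting $\eta$ denote the Euclidean outward unit normal of $\Omega$ and $H_\delta>0$ the (mean-convex) flat mean curvature of $\partial\Omega$, the conformal transformation law for mean curvature shows that the boundary is $g$-minimal exactly when
\begin{equation*}
\partial_\eta u=-\frac{n-2}{2(n-1)}\,H_\delta\,u\qquad\text{on }\partial\Omega,
\end{equation*}
so in particular $\partial_\eta u<0$ there.

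First I would establish the pointwise bound $u\ge 1$ on all of $\R^{n}\setminus\Omega$. Set $v=u-1$; it is superharmonic and tends to $0$ at infinity, so if it were negative somewhere its infimum would be attained at a point $x_{0}\in\partial\Omega$. At such a boundary minimum the Hopf lemma forces the derivative of $v$ in the outward direction of $\R^{n}\setminus\Omega$ (that is, $-\eta$) to be strictly negative, i.e.\ $\partial_\eta u(x_{0})>0$; this contradicts the minimal-boundary relation above, which makes $\partial_\eta u<0$. Hence $u\ge 1$ everywhere. I want to emphasize that this step uses only the maximum principle together with the minimal-boundary condition, and never the assumption that $\partial\Omega$ be outer-minimizing --- which is precisely the point behind the ``surprising'' remark in the abstract.

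Next I would extract $A$ from a flux computation. Integrating $\Delta u$ over $B_{R}\setminus\Omega$ and letting $R\to\infty$, the divergence theorem together with the expansion of $u$ gives
\begin{equation*}
A\,(n-2)\,\omega_{n-1}=-\int_{\R^{n}\setminus\Omega}\Delta u\,dx+\frac{n-2}{2(n-1)}\int_{\partial\Omega}H_\delta\,u\,dA,
\end{equation*}
where I have substituted the boundary relation for $\partial_\eta u$. Both terms on the right are nonnegative; discarding the bulk integral and inserting $u\ge 1$ yields
\begin{equation*}
A\,\ge\,\frac{1}{2(n-1)\,\omega_{n-1}}\int_{\partial\Omega}H_\delta\,dA.
\end{equation*}

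It remains to convert the total mean curvature into Euclidean volume, and this is where the main difficulty lies. The clean route is the Minkowski inequality $\int_{\partial\Omega}H_\delta\,dA\ge(n-1)\,\omega_{n-1}^{1/(n-1)}\,|\partial\Omega|^{(n-2)/(n-1)}$ followed by the isoperimetric inequality $|\partial\Omega|\ge\omega_{n-1}\,\rho^{\,n-1}$ with $\rho=(V/\beta_{n})^{1/n}$; chaining these gives $\int_{\partial\Omega}H_\delta\,dA\ge(n-1)\,\omega_{n-1}\,\rho^{\,n-2}$, whence $A\ge\tfrac12\rho^{\,n-2}$ and $m=2A\ge(V/\beta_{n})^{(n-2)/n}$, with equality tracking the round sphere. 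The obstacle is that the Minkowski inequality is classical only for convex bodies, and its extension to merely mean-convex boundaries is delicate; I expect this is exactly the content that the Symmetrization Theorem is designed to supply, reducing the estimate on $\int_{\partial\Omega}H_\delta\,dA$ (equivalently, that the total mean curvature at fixed enclosed volume is least for the ball) to the rotationally symmetric model, where it is a direct computation. As a sanity check on the strength of the result: for the Schwarzschild manifold the minimal sphere sits at radius $\rho=(m/2)^{1/(n-2)}$, so $(V/\beta_{n})^{(n-2)/n}=\rho^{\,n-2}=m/2$, and the inequality reads $m\ge m/2$ --- correct but off by a factor of two, consistent with the claimed suboptimality.
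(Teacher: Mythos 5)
Your first step ($u\ge 1$ via superharmonicity of $u$, the conformal transformation law for the mean curvature, and the Hopf lemma at a would-be boundary minimum) is exactly the paper's Lemma \ref{one}, and your flux identity expressing $(n-2)\omega_{n-1}A$ in terms of $-\int\Delta u$ and $\int_{\partial\Omega}H_\delta u$ is a legitimate alternative to the capacity formulation, modulo the standard verification that $u$ admits the expansion $1+A|x|^{2-n}+o(|x|^{2-n})$ with $m=2A$ (this follows from the integrability of $\Delta u$, which the decay hypothesis on $R_g$ provides). The gap is in the last step. You need the sharp Minkowski-type inequality $\int_{\partial\Omega}H_\delta\,dA\ge(n-1)\,\omega_{n-1}\,\rho^{\,n-2}$, $\rho=(V/\beta_n)^{1/n}$, for an \emph{arbitrary} bounded domain with smooth mean-convex boundary. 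This is classical only for convex bodies; for merely mean-convex domains it is a genuinely hard statement, known only under additional hypotheses such as star-shapedness or an outward-minimizing boundary (via inverse mean curvature flow or nonlinear potential theory) --- and the point of the theorem, as Remark \ref{rem} stresses, is precisely that no outermost/outward-minimizing hypothesis is imposed. Your hope that the Symmetrization Theorem supplies this is misplaced: the P\'olya--Szeg\"o principle concerns $L^p$ and $W^{1,p}$ norms of \emph{functions}, not the total mean curvature of hypersurfaces, and it does not imply that the ball minimizes $\int H_\delta$ at fixed enclosed volume among mean-convex domains.

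The paper's route avoids total mean curvature entirely. It invokes Bray's mass--capacity theorem $m\ge\mathcal{C}(\Sigma,g)$ (applicable because the double of $M$ is spin by Lemma \ref{BW}), then compares the $g$-Dirichlet energy of a test function with its Euclidean Dirichlet energy: since $dV_g=u^{2n/(n-2)}dV_0$ and $|\nabla_g\varphi|_g^2=u^{-4/(n-2)}|\nabla_0\varphi|_0^2$, the conformal factors combine into $u^2\ge 1$, and then P\'olya--Szeg\"o applied to the extension of $\varphi$ by $1$ across $\Omega$ gives $\mathcal{C}(\Sigma,g)\ge\mathcal{C}(S_R,\delta_{ij})=(V/\beta_n)^{(n-2)/n}$. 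The capacity-versus-volume comparison is exactly what symmetrization of functions delivers; the mean-curvature-versus-volume comparison your argument requires is not. Unless you can prove or cite the Minkowski inequality for general mean-convex domains, your proof does not close.
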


The requirements that $(M,g)$ be conformally flat and that the boundary of 
$\Omega$ have positive mean curvature appears to be quite stringent,
 but not so much from a topological 
point of view.  For example,
the manifolds we constructed in 
\cite{schwartz08}, which are the only known asymptotically flat manifolds with nonnegative scalar curvature having
outermost minimal hypersurfaces which are not topological spheres,\footnote{The outermost minimal hypersurfaces are, topologically,
a product of spheres.} are all conformally flat, and their respective
$\Omega$'s have mean-convex boundary.  Actually, since we do not require the 
boundary of $M$ to be an {\it outermost} minimal hypersurface, there are many topologically-inequivalent examples of manifolds which satisfy the hypotheses of our theorem.
Indeed, from the construction of \cite{schwartz08} it follows fairly easily that one can find examples of scalar flat, asymptotically flat
 manifolds having minimal
boundary which is, topologically, the boundary of any given handlebody in $\R^{n}$. Using appropriate scalings these can be made mean-convex as well.  

\begin{remark}
For the special case of a Schwarzschild metric, it can be easily checked that
the RHS of inequality \eqref{penrose} is $1/2$ the RHS of equation \eqref{pen}.
This is, the volumetric Penrose inequality is off by a factor of 2 
 from being optimal.
\end{remark}

\begin{remark} \label{rem}
Our theorem does not require that the boundary of $M$ be outermost, in
contrast with the standard RPI\footnote{This assumption is necessary in the RPI, for it is well known that 
counterexamples  may be obtained by taking spherically 
symmetric metrics with fixed mass and arbitrarily large
minimal, but not outermost, boundary, like in p. 358 of \cite{huiskenilmanen}.}.
 This may seem odd at first.  Nevertheless, since a non-outermost minimal hypersurface bounds a domain that is contained in the domain that 
 the outermost minimal hypersurface bounds,  inequality \eqref{penrose} only gives a weaker bound 
 when applied to a
 non-outermost  minimal boundary compared to it being applied to the external region of the outermost minimal hypersurface.  Also, notice that we need to impose $u\to 1$ towards infinity to
 get rid of would-be counterexamples where the volume of $\Omega$ can be made arbitrarily 
 large maintaining the mass bounded.\\
\end{remark}

\noindent
{\it Outline of the proof.}   We  use a straightforward extension of a theorem of
Bray to spin manifolds and obtain a lower bound for the ADM mass 
of $(M,g)$ in terms of
the capacity of its boundary.
We then focus on finding an estimate for the capacity
of the boundary.   It turns out that, in the conformally flat case,
this can be done  using a spherical symmetrization 
trick, so long as we can find appropriate bounds for the conformal factor.
In order to obtain these we requite that the boundary of $\Omega$ be mean-convex. 
 \\

 We should
mention that Bray and Miao also exploit the relationship between mass and capacity in   \cite{braymiao}, but their estimates
go in the opposite direction.  In their beautiful
work  they find upper bounds for the capacity of surfaces in terms of the Hawking mass, all this
inside asymptotically flat three dimensional manifolds with nonnegative scalar curvature.  
The proof of their main theorem relies on  the  monotonicity of the Hawking mass along
inverse mean curvature flow; this is known to work only in dimension three.  Their result was 
inspired by an earlier result of Bray and Neves   
\cite{brayneves}, where similar techniques were used for computing Yamabe invariants.\\

{\bf Acknowledgments.}  This work was mostly carried out while visiting the IMPA in Rio de Janeiro, Brazil.
I  thank the University of Tennessee's Professional Development Award for providing with partial support for the trip.  
I thank the IMPA for their hospitality, and 
 Fernando Cod\'a Marques for some useful conversations. 
 I thank Hugh Bray for his useful comments after carefully proofreading 
  a first draft, and Jeff Jauregui for pointing out a redundant 
  argument in the proof of Lemma 11.

\section{Preliminaries}

We begin by recalling some classical facts about spherical symmetrization in $\R^{n}$.

\begin{defn}\label{symm}
Let $u$ be a function in $W^{1,p}(\R^{n})$.  Its {\it spherical
symmetrization}, $u^{*}(x)\equiv u^{*}(|x|),$ is the unique radially symmetric function on $\R^{n}$ which is decreasing on $|x|$, and so that 
the Lebesgue measure of the super-level sets of $u^{*}$ equals the Lebesgue measure of the super-level sets of $u$.  More precisely, $u^*$ is defined as the unique decreasing spherically symmetric function on $\R^n$ so that 
$\mu \{u\ge K\}=\mu \{u^{*}\ge K\}$ for all $K\in\R$.  
 \end{defn}

The following result is a classical theorem in analysis which can be traced back to a principle used by P\'olya and Szeg\"o \cite{polyaszego}. (See also \cite{talenti}, \cite{hilden}.)

\begin{thms}[\cite{polyaszego}] Spherical symmetrization preserves $L^{p}$ norms and decreases
$W^{1,p}$ norms. 
\end{thms}

We need the above result for a calculation inside the proof of the main theorem.  We now introduce the notions of asymptotical flatness,
ADM mass, and capacity, and give a result of Bray  concerning these quantities.

\begin{defn}
Let $n \ge 3$.  A Riemannian manifold $(M^{n}, g)$ is said to be {\it asymptotically flat} if there is a compact set 
$K \subset M$ such that $M\setminus K$ is diffeomorphic to $\R^{n} \setminus B_{1}(0)$, and in this coordinate chart the metric $g_{ij}$ satisfies
$$g_{ij} =\delta_{ij} +O(|x|^{-p}), ~g_{ij,k} = O(|x|^{-p-1}), ~g_{ij,kl} = O(|x|^{-p-2}),~
R_{g} =O(|x|^{-q}),$$
for some $p > (n - 2)/2$ and some $q > n$, where the commas denote partial derivatives in the coordinate chart, and $R_{g}$ is the scalar curvature of $g$.
\end{defn}

For an asymptotically flat manifold $(M,g)$, it is well known that the limit 
\begin{equation*}
m(g)=\lim_{r\to\infty} \frac{1}{2(n-1)\omega_{n-1}} \int_{S_{r}}(g_{ij,i} -g_{ii,j})\nu_{j} dA
\end{equation*}
exists, where $\omega_{n-1}$ is the area of the standard 
unit $(n-1)$-sphere, $S_{r}$ is the coordinate sphere of radius $r$,  $\nu$ is its outward unit normal, and $dA$ is the Euclidean area element on $S_{r}$ . 

\begin{defn} The quantity $m=m(g)$ from above is called the {\it ADM mass} of $(M^{n},g)$.
\end{defn}

\noindent
This notion of mass was first considered by Arnowitt, Deser, and Misner in \cite{adm}. Later,
Bartnik showed that the ADM mass is a Riemannian invariant, independent of choice of asymptotically flat coordinates, cf. Section 4 of \cite{b}. (See also
\cite{piotr}.)

\begin{defn} The {\it capacity} of the boundary $\Sigma$ of 
a complete, asymptotically flat manifold $(M^{n},g)$ is
\begin{equation*}
\mathcal{C}(\Sigma,g)=\inf\left\{ \frac{1}{(n-2)\omega_{n-1}}\int_{M}|\nabla\varphi|^{2} dV\right\},
\end{equation*}
	
where the infimum is taken over all smooth $0\le\varphi(x)\le 1$ which go to zero at infinity and equal to one on the boundary $\Sigma$. 
\end{defn}

\begin{remark}
The above definition of capacity differs slightly from the standard definition of capacity.  We ask that the functions
considered in the infimum satisfy the extra hypothesis
$0\le\varphi(x)\le 1$, which is required  for the  proof of Lemma 
\ref{eas}.
Nevertheless, this extra assumption does not affect the outcome of the infimum, since with or without it the infimum is attained by a positive harmonic function no greater than one. (Cf. equation (86) of \cite{bray}.)
\end{remark}

The following theorem of Bray is central to our purposes since it establishes a 
relationship between mass and capacity.

\begin{thmb}[\cite{bray}]  Let $(M^{n},g),~n\ge 3$ be an asymptotically flat manifold with boundary so that either the double of $M$ is spin,  or $M$
has dimension less than 8.  Assume further that $M$
has nonnegative scalar curvature and minimal boundary $\Sigma$.  Let $m$ be its ADM mass. Then
\begin{equation*}
m \ge \mathcal{C}(\Sigma,g),
\end{equation*}
with equality if and only if $(M^{n},g)$ is a Riemannian Schwarzschild manifold outside its outermost minimal hypersurface $\Sigma$. 
\end{thmb}

\begin{remark}  Bray's original version of the above theorem, which is Theorem 9 of \cite{bray}, 
does not include the case of the double of
$M$  being spin, but for our purposes this is a natural assumption.  It is easy to see that a slight modification of Bray's proof  using Witten's positive mass theorem   whenever necessary
gives a  proof of the statement above.
\end{remark}

%\begin{thma}[\cite{witten}]
%Let $(M^{n},g),~n\ge 3$ be an asymptotically flat spin manifold with nonnegative scalar curvature which 
%has multiple asymptotically flat ends and total mass $m$ in a chosen end. Then
%\begin{equation*}
%m \ge 0,
%\end{equation*}	
%with equality if and only if $(M^{n},g)$ is isometric to $(\R^{n},\delta_{ij})$.
%\end{thma}

Finally, we cite a quick fact about spin geometry that we will use in the
proof of the main theorem.  (Cf. p.90 of \cite{ML}.)

\begin{lemma}\label{BW}
Let $M$ be diffeomorphic to $\R^n\setminus \Omega$, where $\Omega$ is
a bounded open subset of $\R^n$ with smooth boundary. Then both $M$ and its double 
across the boundary are spin.
\end{lemma}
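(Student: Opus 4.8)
The plan is to use the standard characterization that an orientable manifold is spin if and only if its second Stiefel--Whitney class $w_2$ vanishes, and to verify this for both $M$ and its double by exhibiting each as a hypersurface in Euclidean space with trivial normal bundle, so that its tangent bundle is stably trivial and all its Stiefel--Whitney classes vanish. For $M$ itself this is immediate: since $M$ is diffeomorphic to $\R^{n}\setminus\Omega$, a codimension-zero submanifold-with-boundary of $\R^{n}$, its tangent bundle is the restriction $T\R^{n}|_{M}$ and is therefore trivial. A parallelizable manifold has vanishing total Stiefel--Whitney class, so $M$ is orientable with $w_2(M)=0$, i.e. $M$ is spin.

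The substance is the double $DM=M\cup_{\partial\Omega}M$. First I would fix a smooth defining function $\rho\colon\R^{n}\to\R$ for $\Omega$, with $\rho<0$ on $\Omega$, $\rho>0$ on $\R^{n}\setminus\overline{\Omega}$, $\rho^{-1}(0)=\partial\Omega$, and $0$ a regular value of $\rho$; such a $\rho$ exists because $\partial\Omega$ is a compact smooth hypersurface bounding the bounded set $\Omega$ (take $\rho$ a signed distance near $\partial\Omega$ and extend). I then consider the level set
\[
S=\{(x,t)\in\R^{n}\times\R=\R^{n+1}:\ t^{2}=\rho(x)\}=F^{-1}(0),\qquad F(x,t)=t^{2}-\rho(x).
\]
Since $t^{2}=\rho(x)\ge 0$ forces $x\in\R^{n}\setminus\Omega=M$, the projection $(x,t)\mapsto x$ sends $S$ two-to-one onto the interior of $M$ and one-to-one onto $\partial\Omega$ (where $t=0$), while $(x,t)\mapsto(x,-t)$ is the doubling involution; thus $S$ is a concrete model of $DM$.

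The key computation is that $S$ is a smooth, orientable hypersurface. Away from $t=0$ the derivative $\partial F/\partial t=2t$ is nonzero, while along $S\cap\{t=0\}=\partial\Omega\times\{0\}$ one has $\nabla F=(-\nabla\rho,0)\neq 0$ because $0$ is a regular value of $\rho$; hence $0$ is a regular value of $F$ and $S$ is smooth. As a regular level set in $\R^{n+1}$, $S$ is orientable with normal bundle trivialized by $\nabla F/|\nabla F|$, so $TS\oplus\nu=T\R^{n+1}|_{S}$ is trivial with $\nu$ trivial; therefore $TS$ is stably trivial, all of its Stiefel--Whitney classes vanish, and $DM\cong S$ is spin.

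The main obstacle I anticipate is purely the local verification at the fold $\partial\Omega\times\{0\}$: one must confirm both that $S$ is genuinely smooth there and that the induced smooth structure agrees with the standard one on the abstract double. Both reduce to the local model in boundary coordinates where $\rho=x_{n}$, in which $S$ is the graph $x_{n}=t^{2}$ and $t$ serves as a genuine collar coordinate crossing $\partial\Omega$; this is harmless, so once $\rho$ is chosen with $0$ a regular value the remaining identifications are formal.
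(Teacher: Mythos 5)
Your argument is correct. Note, however, that the paper does not actually prove this lemma: it is stated as a ``quick fact'' with a citation to p.~90 of Lawson--Michelsohn, where the relevant principle is that a manifold with stably trivial tangent bundle (e.g.\ a hypersurface in Euclidean space with trivial normal bundle) has vanishing Stiefel--Whitney classes and is therefore spin. Your proof supplies the missing content: the triviality of $TM=T\R^n|_M$ handles $M$ itself, and the realization of the double as the regular level set $S=\{t^2=\rho(x)\}\subset\R^{n+1}$ is exactly the kind of concrete model the citation leaves implicit, after which stable parallelizability gives $w_1=w_2=0$. The one point worth spelling out a bit more is the identification $S\cong DM$: the projection $(x,t)\mapsto x$ from $S\cap\{t\ge 0\}$ to $M$ is a smooth homeomorphism but \emph{not} a diffeomorphism at the fold (its differential degenerates in the $t$-direction where $\rho=0$), so one should either invoke the standard fact that a homeomorphism of manifolds with boundary restricting to diffeomorphisms of the interiors and of the boundaries can be corrected near the boundary using collars, or simply observe that $S$ is a union of two copies of a manifold diffeomorphic to $M$ glued along $\partial\Omega$ by the identity, and that the double is unique up to diffeomorphism. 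You flag this issue and reduce it to the local model $x_n=t^2$, which is the right reduction; with that caveat made precise the proof is complete, and it is in fact more self-contained than what the paper provides.
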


\section{Proof of Theorem \ref{prs}}\label{proof}

\noindent
Throughout this section we will be using two different metrics:

\begin{itemize}
\item[(i)] the Euclidean metric $\delta_{ij}$ on $\R^n$,
\item[(ii)] the conformally flay metric of $(M,g)$ given by $g=u^{4/(n-2)}\delta_{ij}$, where $u>0$ is a smooth function defined on $\R^n\setminus \Omega$,
\end{itemize}

\noindent
Standard quantities depending on the metric, like covariant derivatives, volume forms, norms and so on,
will be denoted by, respectively,
\begin{itemize}
\item[(i)] $\nabla_{0},~dV_{0},~|\cdot|_{0}$, 
\item[(ii)]$\nabla_{g},~dV_{g},~|\cdot|_{g}$.
\end{itemize}

We begin by proving an estimate
for the conformal factor $u$, which is of independent interest.  (Here is where we need that the boundary of 
$\Omega$ be mean-convex.)

\begin{lemma}\label{one}
Suppose that $(M^{n},g)$
is an asymptotically flat $n$-dimensional manifold  with nonnegative scalar curvature which is isometric to  $(\R^{n}\setminus \Omega,
u^{4/(n-2)}\delta_{ij})$, where $\emptyset\neq\Omega\subset \R^{n}$ is an open bounded set with smooth mean-convex boundary.  Assume that
the boundary of $M$ is minimal,  and that 
$u$ is normalized so that $u\to 1$ towards infinity.  Then $u\ge 1$ on $M$.  
\end{lemma}

\begin{proof} 
Recall that the transformation law for the scalar curvature under conformal changes of the metric is given by $R_g=\frac{4(n-1)}{n-2}u^{-(n+2)/(n-2)}
 (-\Delta_0+\frac{n-2}{4(n-1)}R_0)u$, where $\Delta_0$ is the Euclidean 
 Laplacian and $R_0$ is the Euclidean scalar curvature, namely $R_0\equiv 0$.
 Since we assume that $R_g\ge 0$, it follows that $u$ is superharmonic
 on $M$. Therefore, $u$ achieves its minimum value at either infinity or at 
 the boundary $\partial \Omega$.  At infinity $u$ goes to one.  We now show that
 at the boundary it does not achieve its minimum, and so it must be everywhere
 greater or equal than one.\\
  {\it Claim.} $u$ does not achieve its minimum on the boundary  $\partial \Omega$.\\
 From hypothesis, the boundary of $M$ is a minimal hypersurface.  This is,
 the mean curvature of the boundary of $M$ is zero with respect to 
 the  metric $g=u^{4/(n-2)}\delta_{ij}$.  Now, the transformation law
 for the mean curvature under the conformal change of the metric 
  $g=u^{4/(n-2)}\delta_{ij}$ is given by
 $h_g=\frac{2}{n-2}u^{-n/(n-2)}(\partial_\nu+\frac{(n-2)}{2}h_0)u$, where
 $h_0$ is the Euclidean mean curvature and $\nu$ is the outward-pointing
 normal.  Since we have assumed that the boundary of $\Omega$ is mean
 convex, i.e. that $h_0>0$, it follows that $\partial_\nu u<0$ on all of the boundary
 of $\Omega$.  This way, $u$ decreases when we move away from the boundary towards the interior of $M$.  From this it follows
 that $u$ cannot achieve its minimum on the boundary.  This proves
 the claim, and the Lemma follows. 
\end{proof}

We now bring spherical symmetrization into the picture.
Suppose that $0\le\varphi\le 1$ is a smooth function on $\R^{n}\setminus \Omega$ 
which is exactly 1 on the boundary $\partial \Omega$ and converges to 0 at infinity.  
We may extend this function to a function $\tilde \varphi$ defined on all of $\R^{n}$, given by
\begin{equation*}
\tilde \varphi =\left\{
\begin{tabular}{ll}
1 & in  $\Omega$,\\
$\varphi$ & outside  $\Omega$.
\end{tabular}
\right.
\end{equation*}
 (Notice that $\tilde \varphi$  is Lipschitz.)
 Now consider $(\tilde \varphi)^{*}$,  the spherical symmetrization of $\tilde\varphi$, which is defined on all of $\R^{n}$.
(See Definition \ref{symm}.)

\begin{defn}
Let $\varphi$ be as above, and let $V$ be the Euclidean volume of $\Omega$.  We define $\varphi^{*}$
to be the restriction of $(\tilde \varphi)^{*}$ to $\R^{n}\setminus B_R(0)$, where $R=R(V)$ is the 
radius of the Euclidean ball of volume $V$, namely $R=(V/\beta_n)^{1/n}$.
\end{defn}

\begin{lemma}\label{eas}
Let $\varphi$ be as  above.  Then 
\begin{equation*}
\int_{M}
|\nabla_{0}\varphi|_{0}^{2}dV_{0}\ge\int_{\R^{n} \setminus B_R(0) }|\nabla_{0}\varphi^{*} |_{0}^{2}dV_{0}.
\end{equation*}
\end{lemma}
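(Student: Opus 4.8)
The plan is to transport the problem to all of $\R^{n}$, apply the P\'olya--Szeg\"o inequality to the extended function, and then discard the contribution coming from the ball $B_R(0)$.

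First I would rewrite the left-hand side in terms of the extension $\tilde\varphi$. Since $\tilde\varphi\equiv 1$ on $\Omega$ its gradient vanishes there, and since $\varphi=1$ on $\partial\Omega$ the extension $\tilde\varphi$ is continuous (indeed Lipschitz, as already noted), so it lies in $W^{1,2}(\R^{n})$ with weak gradient equal to $\nabla_{0}\varphi$ on $\R^{n}\setminus\Omega$ and equal to $0$ on $\Omega$. Using that $M$ is, as a set, $\R^{n}\setminus\Omega$ and that these integrals are taken with respect to the Euclidean structure, this gives
\[
\int_{M}|\nabla_{0}\varphi|_{0}^{2}\,dV_{0}=\int_{\R^{n}\setminus\Omega}|\nabla_{0}\varphi|_{0}^{2}\,dV_{0}=\int_{\R^{n}}|\nabla_{0}\tilde\varphi|_{0}^{2}\,dV_{0}.
\]

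Next I would apply the Symmetrization Theorem to $\tilde\varphi\in W^{1,2}(\R^{n})$, with $p=2$, to obtain
\[
\int_{\R^{n}}|\nabla_{0}\tilde\varphi|_{0}^{2}\,dV_{0}\ge\int_{\R^{n}}|\nabla_{0}(\tilde\varphi)^{*}|_{0}^{2}\,dV_{0}.
\]
It then remains to identify the symmetrized function on $B_R(0)$. This is where the normalization $0\le\varphi\le 1$ enters: it forces $\tilde\varphi\le 1$ everywhere, while $\tilde\varphi\equiv 1$ on $\Omega$ gives $\mu\{\tilde\varphi\ge 1\}\ge\mu(\Omega)=V$. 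Since symmetrization preserves the measures of super-level sets and $(\tilde\varphi)^{*}$ is radially decreasing, the set $\{(\tilde\varphi)^{*}\ge 1\}$ is a centered ball of volume at least $V$, hence of radius at least $R=(V/\beta_{n})^{1/n}$; combined with $(\tilde\varphi)^{*}\le 1$ this yields $(\tilde\varphi)^{*}\equiv 1$, and so $\nabla_{0}(\tilde\varphi)^{*}=0$, on $B_R(0)$. Consequently the integral over $\R^{n}$ reduces to the integral over $\R^{n}\setminus B_R(0)$, where $(\tilde\varphi)^{*}$ agrees with $\varphi^{*}$ by definition:
\[
\int_{\R^{n}}|\nabla_{0}(\tilde\varphi)^{*}|_{0}^{2}\,dV_{0}=\int_{\R^{n}\setminus B_R(0)}|\nabla_{0}\varphi^{*}|_{0}^{2}\,dV_{0}.
\]
Chaining the three displays gives the asserted inequality.

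The routine reductions are harmless, but the step I expect to require the most care is the justification that $\tilde\varphi$ is a legitimate input for the Symmetrization Theorem and that its weak gradient splits as described across the Lipschitz interface $\partial\Omega$. The genuinely essential structural input is the measure comparison $\mu\{\tilde\varphi\ge 1\}\ge V$, which pins down $(\tilde\varphi)^{*}$ on $B_R(0)$ and is precisely where the hypotheses $0\le\varphi\le 1$ and $\varphi|_{\partial\Omega}=1$ are indispensable. (For the inequality alone one could simply discard the nonnegative term $\int_{B_R(0)}|\nabla_{0}(\tilde\varphi)^{*}|_{0}^{2}\,dV_{0}$, but identifying it as zero is what makes $\varphi^{*}$ an admissible competitor for the capacity estimate that follows.)
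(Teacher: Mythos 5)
Your proposal is correct and follows essentially the same route as the paper's own proof: extend $\varphi$ by $1$ across $\Omega$, apply the Symmetrization Theorem to $\tilde\varphi$, and observe that $(\tilde\varphi)^{*}\equiv 1$ on $B_R(0)$ so the symmetrized integral localizes to $\R^{n}\setminus B_R(0)$. Your super-level-set justification of $(\tilde\varphi)^{*}\equiv 1$ on $B_R(0)$ merely spells out a step the paper states without elaboration.
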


\begin{proof} Recall that $M=\R^{n}\setminus \Omega$.
Since $\tilde\varphi$ is Lipschitz and is constant inside $\Omega$, it follows that 
$\int_{\R^{n}\setminus \Omega}|\nabla_{0}\varphi|_{0}^{2}dV_{0}=\int_{\R^{n}}|\nabla_{0}\tilde\varphi|_{0}^{2}dV_{0}$.  From the Symmetrization Theorem applied to $\tilde\varphi$, we obtain that 
$\int_{\R^{n}}|\nabla_{0}\tilde\varphi|_{0}^{2}dV_{0}\ge \int_{\R^{n}}|\nabla_{0}(\tilde\varphi)^{*}|_{0}^{2}dV_{0}$.
But since $0\le\tilde\varphi\le 1$ is constant and equal to one on $\Omega$,  it follows that 
$(\tilde\varphi)^{*}$ is also constant and equal to one on the ball $B_R(0) $, where $R=(V/\beta_n)^{1/n}$
and $V$ is the Euclidean volume of $\Omega$.  This way, $\int_{\R^{n}}|\nabla_{0}(\tilde\varphi)^{*}|_{0}^{2}dV_{0}= \int_{\R^{n}\setminus B_R(0)}|\nabla_{0}\varphi^{*}|_{0}^{2}dV_{0}$.  Putting this inequalities together
gives a proof of the lemma.
\end{proof}

\begin{lemma} \label{easy}
Let $g=u^{4/(n-2)}\delta_{ij}$.  We have that
 $|\nabla_{g}\varphi|_{g}^{2}=u^{-4/(n-2)}|\nabla_{0}\varphi|_{0}^{2}$,
and $dV_{g}=u^{2n/(n-2)}dV_{0}$.
\end{lemma}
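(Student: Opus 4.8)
The plan is to treat this as a direct pointwise computation from the two defining relations of Riemannian geometry, namely that the squared gradient norm of a function equals the inverse metric contracted against its coordinate derivatives, and that the volume form is the square root of the metric determinant times Lebesgue measure. Since $g_{ij}=u^{4/(n-2)}\delta_{ij}$ is conformal to the flat metric with a pointwise-positive factor, both computations reduce to tracking powers of $u$, so I do not expect any genuine obstacle; the only care needed is bookkeeping the exponents.

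First I would write down the inverse metric. Because $g_{ij}=u^{4/(n-2)}\delta_{ij}$ is a scalar multiple of $\delta_{ij}$, its inverse is simply
\begin{equation*}
g^{ij}=u^{-4/(n-2)}\delta^{ij}.
\end{equation*}
Using the standard identity $|\nabla_{g}\varphi|_{g}^{2}=g^{ij}\,\partial_{i}\varphi\,\partial_{j}\varphi$, which holds in any Riemannian metric, I would then substitute to obtain
\begin{equation*}
|\nabla_{g}\varphi|_{g}^{2}=u^{-4/(n-2)}\delta^{ij}\,\partial_{i}\varphi\,\partial_{j}\varphi=u^{-4/(n-2)}|\nabla_{0}\varphi|_{0}^{2},
\end{equation*}
since $\delta^{ij}\,\partial_{i}\varphi\,\partial_{j}\varphi$ is exactly $|\nabla_{0}\varphi|_{0}^{2}$. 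This establishes the first claimed identity.

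For the volume form, I would compute the determinant of $g_{ij}=u^{4/(n-2)}\delta_{ij}$ in the given coordinates. Scaling each of the $n$ rows of the identity matrix by $u^{4/(n-2)}$ multiplies the determinant by $u^{4n/(n-2)}$, so $\det g_{ij}=u^{4n/(n-2)}$ and hence $\sqrt{\det g_{ij}}=u^{2n/(n-2)}$. Therefore
\begin{equation*}
dV_{g}=\sqrt{\det g_{ij}}\,dx=u^{2n/(n-2)}\,dV_{0},
\end{equation*}
which is the second identity. Both statements follow purely from the conformal form of $g$, with no need for the mean-convexity or minimality hypotheses used elsewhere; the lemma is thus a self-contained change-of-variables fact that will later be combined with Lemmas \ref{one} and \ref{eas} to bound the capacity.
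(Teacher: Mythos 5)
Your computation is correct and matches the paper's approach exactly: the paper dismisses this lemma as a ``straightforward calculation,'' and the calculation it has in mind is precisely the one you carried out (inverting the conformal metric for the gradient norm and taking the square root of the determinant for the volume form). Nothing is missing.
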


\begin{proof} Straightforward calculation.
\end{proof}

We  now   prove the key lemma.

\begin{lemma}\label{basic}  Let $(M,g)$ be as in Theorem 1 and
consider a smooth function $0\le\varphi\le 1$ on $M$ so that $\varphi=1$ on 
$\partial M$ and $\varphi\to 0$ towards infinity. Then
\begin{equation*}
\int_{M}|\nabla_{g}\varphi|^{2}_{g}dV_{g}\ge  \int_{\R^{n}\setminus B_R(0)}|\nabla_{0}\varphi^{*}|_{0}^{2}dV_{0},
\end{equation*}
where  $R=(V/\beta_n)^{1/n}$.
\end{lemma}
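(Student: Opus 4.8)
The plan is to assemble the three preceding lemmas; no new analytic input is needed. The idea is to transport the $g$-Dirichlet integral on the left to a purely Euclidean integral, which Lemma~\ref{eas} already controls from below by the symmetrized integral on $\R^{n}\setminus B_R(0)$.

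First I would rewrite the integrand on the left using Lemma~\ref{easy}. Multiplying the two conformal factors appearing there gives
\[
|\nabla_{g}\varphi|^{2}_{g}\,dV_{g}=u^{-4/(n-2)}|\nabla_{0}\varphi|_{0}^{2}\cdot u^{2n/(n-2)}\,dV_{0}=u^{2}\,|\nabla_{0}\varphi|_{0}^{2}\,dV_{0},
\]
since $-4/(n-2)+2n/(n-2)=(2n-4)/(n-2)=2$. The crucial point is that this combined exponent is exactly $2$, a nonnegative power, so the weight relating the two integrands is $u^{2}$ with no sign ambiguity.

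Next I would invoke Lemma~\ref{one}, which gives $u\ge 1$ on $M$ (this is precisely where mean-convexity of $\partial\Omega$ enters). Hence $u^{2}\ge 1$ pointwise, and integrating over $M=\R^{n}\setminus\Omega$ yields
\[
\int_{M}|\nabla_{g}\varphi|^{2}_{g}\,dV_{g}=\int_{M}u^{2}\,|\nabla_{0}\varphi|_{0}^{2}\,dV_{0}\ge \int_{M}|\nabla_{0}\varphi|_{0}^{2}\,dV_{0}.
\]
Finally, Lemma~\ref{eas} bounds the right-hand side below by $\int_{\R^{n}\setminus B_R(0)}|\nabla_{0}\varphi^{*}|_{0}^{2}\,dV_{0}$ with $R=(V/\beta_n)^{1/n}$, and chaining the two inequalities gives the claim.

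I do not expect a genuine obstacle at this stage: the substance of the argument lives in the earlier lemmas, namely the pointwise bound $u\ge 1$ (Lemma~\ref{one}) and the P\'olya--Szeg\"o symmetrization estimate (Lemma~\ref{eas}). The only thing to verify with care is the exponent arithmetic that collapses the combined conformal weight to $u^{2}$; were that power negative, the bound $u\ge 1$ would push the inequality in the wrong direction, so it is both reassuring and essential that it equals $2$.
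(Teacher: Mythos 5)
Your proposal is correct and follows essentially the same route as the paper: apply Lemma~\ref{easy} to collapse the conformal weights to $u^{2}$, invoke $u\ge 1$ from Lemma~\ref{one} to drop the weight, and finish with Lemma~\ref{eas}. The only cosmetic difference is that the paper factors out $\inf_M u^2$ before applying $u\ge1$, whereas you use the pointwise bound directly; the substance is identical.
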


\begin{proof} Using Lemma \ref{easy} we obtain
\begin{align*}
\int_{M}|\nabla_{g}\varphi|_{g}^{2}dV_{g} = &
     \int_{M}u^{-4/(n-2)}|\nabla_{0}\varphi|_{0}^{2}u^{2n/(n-2)}dV_{0} =
 \int_{M}u^{2}|\nabla_{0}\varphi|_{0}^{2}dV_{0} \\
    \ge & (\inf_{M} u^{2})\int_{M}|\nabla_{0}\varphi|_{0}^{2}dV_{0}, \\
    \intertext{but $u\ge 1$ by Lemma \ref{one}; this together with Lemma \ref{eas} gives}
   \ge & \int_{M}|\nabla_{0}\varphi|_{0}^{2}dV_{0} \ge  \int_{\R^{n}\setminus B_R}|\nabla_{0}\varphi^{*}|_{0}^{2}dV_{0}.
 \end{align*}

\end{proof}

\begin{proof}[Proof of Theorem \ref{prs}] The double of $M$  is spin 
from Lemma \ref{BW}.  Thus, we may apply 
Bray's Theorem and obtain that $m(g)\ge \mathcal{C}(\Sigma,g)$.  From 
Lemma \ref{basic} it follows that $\mathcal{C}(\Sigma,g)\ge \mathcal{C}(S_{R},\delta_{ij})$
where $S_{R}$ is the boundary of $\R^{n}\setminus B_R(0)$.  
This last quantity is easily computed and known to be $\left(\frac{V}{\beta_{n}}\right)^{\frac{n-2}{n}}$.
\end{proof}

 \begin{bibdiv}
\begin{biblist}
 
 \bib{adm}{article}{
   author={Arnowitt, R.},
   author={Deser, S.},
   author={Misner, C. W.},
   title={Coordinate invariance and energy expressions in general
   relativity. },
   journal={Phys. Rev. (2)},
   volume={122},
   date={1961},
   pages={997--1006}
}
 
 \bib{b}{article}{
   author={Bartnik, R.},
   title={The mass of an asymptotically flat manifold},
   journal={Comm. Pure Appl. Math.},
   volume={39},
   date={1986},
   number={5},
   pages={661--693},
   issn={0010-3640},

}

 \bib{bray}{article}{
    author={Bray, H. L.},
     title={Proof of the Riemannian Penrose inequality using the positive
            mass theorem},
   journal={J. Differential Geom.},
    volume={59},
      date={2001},
    number={2},
     pages={177\ndash 267},
      issn={0022-040X},
}

\bib{brayiga}{article}{
   author={Bray, H. L.},
   author={Iga, K.},
   title={Superharmonic functions in $\bold R^n$ and the Penrose
   inequality in general relativity},
   journal={Comm. Anal. Geom.},
   volume={10},
   date={2002},
   number={5},
   pages={999--1016},
   issn={1019-8385},
 
}

 \bib{braylee}{article}{
   author={Bray, H. L.},
   author={Lee, D. A.},
   title={On the Riemannian Penrose inequality in dimensions less than
   eight},
   journal={Duke Math. J.},
   volume={148},
   date={2009},
   number={1},
   pages={81--106},
   issn={0012-7094},

}

\bib{braymiao}{article}{
   author={Bray, H. L.},
   author={Miao, P.},
   title={On the capacity of surfaces in manifolds with nonnegative scalar
   curvature},
   journal={Invent. Math.},
   volume={172},
   date={2008},
   number={3},
   pages={459--475},
   issn={0020-9910},

}
 
 \bib{brayneves}{article}{
   author={Bray, H. L.},
   author={Neves, A.},
   title={Classification of prime 3-manifolds with Yamabe
   invariant greater than $\Bbb{RP}^3$},
   journal={Ann. of Math. (2)},
   volume={159},
   date={2004},
   number={2},
   pages={407--424},

}
 
 \bib{piotr}{article}{
   author={Chru{\'s}ciel, P.},
   title={Boundary conditions at spatial infinity from a Hamiltonian point
   of view},
   conference={
      title={Topological properties and global structure of space-time
      (Erice, 1985)},
   },
   book={
      series={NATO Adv. Sci. Inst. Ser. B Phys.},
      volume={138},
      publisher={Plenum},
      place={New York},
   },
   date={1986},
   pages={49--59},
   }
 
 \bib{hilden}{article}{
   author={Hild{\'e}n, K.},
   title={Symmetrization of functions in Sobolev spaces and the
   isoperimetric inequality},
   journal={Manuscripta Math.},
   volume={18},
   date={1976},
   number={3},
   pages={215--235},
   issn={0025-2611},
}

\bib{huiskenilmanen}{article}{
   author={Huisken, G.},
   author={Ilmanen, T.},
   title={The inverse mean curvature flow and the Riemannian Penrose
   inequality},
   journal={J. Differential Geom.},
   volume={59},
   date={2001},
   number={3},
   pages={353--437},
   issn={0022-040X},
}

\bib{ML}{book}{
   author={Lawson, H. B., Jr.},
   author={Michelsohn, M.-L.},
   title={Spin geometry},
   series={Princeton Mathematical Series},
   volume={38},
   publisher={Princeton University Press},
   place={Princeton, NJ},
   date={1989},
   pages={xii+427},
   isbn={0-691-08542-0},
}

\bib{pt}{article}{
   author={Parker, T.},
   author={Taubes, C. H.},
   title={On Witten's proof of the positive energy theorem},
   journal={Comm. Math. Phys.},
   volume={84},
   date={1982},
   number={2},
   pages={223--238},
   issn={0010-3616},
}

\bib{polyaszego}{book}{
   author={P{\'o}lya, G.},
   author={Szeg{\"o}, G.},
   title={Isoperimetric Inequalities in Mathematical Physics},
   series={Annals of Mathematics Studies, no. 27},
   publisher={Princeton University Press},
   place={Princeton, N. J.},
   date={1951},
   pages={xvi+279},
}

\bib{schoenyau}{article}{
   author={Schoen, R.},
   author={Yau, S.-T.},
   title={On the proof of the positive mass conjecture in general
   relativity},
   journal={Comm. Math. Phys.},
   volume={65},
   date={1979},
   number={1},
   pages={45--76},
   issn={0010-3616},
}

\bib{schwartz08}{article}{
   author={Schwartz, F.},
   title={Existence of outermost apparent horizons with product of spheres
   topology},
   journal={Comm. Anal. Geom.},
   volume={16},
   date={2008},
   number={4},
   pages={799--817},
   issn={1019-8385},
}

\bib{talenti}{article}{
   author={Talenti, G.},
   title={Best constant in Sobolev inequality},
   journal={Ann. Mat. Pura Appl. (4)},
   volume={110},
   date={1976},
   pages={353--372},
   issn={0003-4622},
}

\bib{witten}{article}{
   author={Witten, E.},
   title={A new proof of the positive energy theorem},
   journal={Comm. Math. Phys.},
   volume={80},
   date={1981},
   number={3},
   pages={381--402},
   issn={0010-3616},
}

\end{biblist}
\end{bibdiv}

\end{document}